\newtheorem{theorem}{Theorem}[section]
\newtheorem{lemma}[theorem]{Lemma}
\newtheorem{corr}[theorem]{Corollary}
\newtheorem{df}[theorem]{Definition}
\newcommand{\so}{\mathop{\rm SO}\nolimits}
\newcommand{\card}{\mathop{\rm card_{\mathcal F}}\nolimits}
\newcommand{\dist}{\mathop{\rm dist}\nolimits}
\title {Analogs of Steiner's porism and Soddy's hexlet  in higher dimensions via spherical codes}
\author {Oleg R. Musin\thanks{The author is partially supported by the NSF grant DMS-1400876 and the RFBR grant 15-01-99563.}}
\begin{document}
\date{}
\maketitle

\begin{abstract} In this paper we consider generalizations to higher dimensions of classical results on chains of tangent spheres.  
\end {abstract}

\medskip

\noindent {\bf Keywords:} Steiner's porism, Soddy's Hexlet, spherical codes 

\medskip 

\section{Introduction}

Suppose we have a chain of $k$ circles all of which are tangent to two given non-intersecting circles $S_1$, $S_2$, and each circle in the chain is tangent to the previous and next circles in the chain. Then, any other circle C that is tangent to $S_1$ and $S_2$ along the same bisector is also part of a similar chain of $k$ circles. This fact is known as {\em Steiner's porism} \cite[Chap. 7]{Barnes}, \cite[Chap. 4, 5]{Ogilvy}.  The usual proof of this is simply to choose an inversion that makes $S_1$ and $S_2$ concentric, after which the result follows immediately by rotation symmetry.  (Below are shown two closed Steiner chains and the inversion transform to a chain of congruent circles.) 

\medskip

\medskip


\begin{tikzpicture}[scale=1.5]


\draw [very thick](0.8762,0) circle (1.1905);
\draw [very thick](0.5,0) circle (0.5099);

\draw ( 1.5383, 0) circle (0.528382);
\draw ( 1.0026 , 0.7703) circle ( 0.4099 );
\draw (  0.3316,  0.7527) circle (0.2614);
\draw ( -0.0131 ,0.4688  ) circle (0.1851 );
\draw ( -0.1470 ,0.1555 ) circle (0.1555 );
\draw ( -0.1470 ,-0.1555 ) circle (0.1555 );
\draw (  -0.0131  , -0.4688 ) circle ( 0.1851);
\draw (   0.3316 ,  -0.7527 ) circle (0.2614);
\draw (   1.0026, -0.7703 ) circle ( 0.4099);


\draw [very thick](4.5762,0) circle (1.1905);

\draw [very thick](4.2,0) circle (0.5099);

\draw (5.0726, 0.4917) circle (0.4917 );
\draw ( 4.3261,  0.8269 ) circle ( 0.3266);
\draw ( 3.8246  , 0.6209 ) circle (0.2156);

\draw (  3.6004, 0.3120 ) circle (0.1660 );
\draw (  3.5379, 0 ) circle ( 0.1522);
\draw (  3.6004 , -0.3120 ) circle ( 0.1660);

\draw (  3.8246, -0.6209 ) circle ( 0.2156);
\draw (4.3261, -0.8269 ) circle ( 0.3266);
\draw ( 5.0726,-0.4917 ) circle (0.4917);

\draw [very thick](8.3,0) circle (1.1905);
\draw [very thick](8.3,0) circle (0.5837);

\draw ( 9.1336 ,0.3034  ) circle (0.3034);

\draw (8.7435 ,0.7682 ) circle (0.3034);
\draw ( 8.1460, 0.8736 ) circle (0.3034);
\draw ( 7.6204, 0.5702 ) circle (0.3034);
\draw (  7.4129,0 ) circle (0.3034);

\draw ( 7.6204, -0.5702 ) circle (0.3034);
\draw (8.1460, -0.8736 ) circle (0.3034);
\draw ( 8.7435,   -0.7682 ) circle (0.3034);
\draw ( 9.1336,   -0.3034 ) circle (0.3034);

\end{tikzpicture}




\medskip

\medskip

{\em Soddy's hexlet} is a chain of six spheres each of which is tangent to both of its neighbors and also to three mutually tangent given spheres. Frederick Soddy published the following theorem in 1937 \cite{Soddy}: 
%
%
\noindent{\em It  is always possible to find a hexlet for any choice of three mutually tangent spheres.}  
%
%
\noindent Note that Soddy's hexlet was also discovered independently in Japan, as shown by Sangaku tablets from 1822 in the Kanagawa prefecture \cite{Jap}. 

The general problem of finding a hexlet for three given mutually tangent spheres $S_1$, $S_2$, and $S_3$, can be reduced to the annular case using inversion. Inversion in the point of tangency between spheres $S_1$ and $S_2$ transforms them into parallel planes $P_1$ and $P_2$.  Since sphere $S_3$ is tangent to both $S_1$  and $S_2$ and does not pass through the center of inversion, $S_3$ is transformed into another sphere $S'_3$ that is tangent to both planes.  Six spheres may be packed around $S'_3$ and touches planes $P_1$ and $P_2$. Re-inversion restores the three original spheres, and transforms these six spheres into a hexlet for the original problem \cite{Barnes,Ogilvy}.

\medskip

\begin{tikzpicture}[scale=1.5]


\draw [very thick] (0,0.5) circle (0.5);
\draw [very thick](0,1) circle (1);

\draw [fill] (0,0) circle [radius=0.05];

\draw ( 0.3704, 1.3889) circle (0.4630 );
\draw (-0.5085, 1.2712  ) circle ( 0.4237);
\draw (0.7071,0.7576  ) circle (0.2525);

\node [right] at (-0.5, 0.5) {$S_1$};
\node [left] at (-1, 1) {$S_2$};

\draw [very thick] (4,1/2) -- (8,1/2);
\draw [very thick] (4,3/2) -- (8,3/2);

\node [above] at (7.8, 1.5) {$P_1$};
\node [below] at (7.8, 1/2) {$P_2$};

\draw ( 6, 1) circle (1/2);
\draw ( 7, 1) circle (1/2);
\draw ( 5, 1) circle (1/2);

\end{tikzpicture}

\medskip
 





 

  Let ${\mathcal F}:=\{S_1,S_2\}$, where $S_1$ and $S_2$ are tangent spheres in ${\mathbb R}^{n}$. Let $\Pi_n({\mathcal F})$ denote the set of all  (non--congruent) sphere packings in ${\mathbb R}^{n}$ such that all spheres in a packing $P\in \Pi_n({\mathcal F})$ are tangent to both spheres from ${\mathcal F}$.  In  \cite{HK2001} the authors report that there is an unpublished result by Kirkpatrick and  Rote about this case.   In fact, they proved that 
 
 \medskip 
  {\em There is a one--to--one correspondence $T_{\mathcal F}$ between sphere packings from $\Pi_n({\mathcal F})$ and unit sphere packings in ${\mathbb R}^{n-1}$.}
  
 \medskip
 
\noindent It is easy to prove. Indeed, let $T_{\mathcal F}$ be an inversion in the point of tangency between spheres from ${\mathcal F}$ such that it makes $S_1$ and $S_2$ parallel hyperplanes with the distance between them equals 2. Then the result follows immediately by the fact that a packing $P\in \Pi_n({\mathcal F})$ transforms to a unit sphere packing $T_{\mathcal F}(P)$.  (\cite[Proposition 4.5]{HK2001} contains a sketch of proof.)  


  
\medskip

Let $X$ be a set of points in a unit sphere ${\mathbb S}^{d-1}$. We say that $X$ is a {\em spherical $\psi$--code} 
if the angular distance between distinct points in $X$ is at least $\psi$.  Denote by $A(d,\psi)$  the maximal size of a $\psi$--code in ${\mathbb S}^{d-1}$ \cite{CS}.

Note that $A(d,\pi/3)=k(d)$, where  by $k(d)$ we denote the {\em kissing number}, i.e. the maximum number of  non--overlapping unit spheres in ${\mathbb R}^{d}$ that can be arranged so that all of them touch one (central) unit sphere.


In this paper we show a relation between sphere packings in ${\mathbb R}^{n}$ that are tangent spheres in a given family $ {\mathcal F}$ and spherical codes (Theorem \ref{th1}). This relation gives generalizations of Steiner's porism and Soddy's hexlet  to higher dimensions.


\section{${\mathcal F}$--kissing arrangements and spherical codes}

Here we say that two distinct spheres $S_1$ and $S_2$ in ${\mathbb R}^{n}$ are {\em non--intersecting} if  the intersection of these spheres is not a sphere of radius $r>0$. In other words, either $S_1\cap S_2=\emptyset$ or these spheres  touch each other.

\begin{df} \label{def21}
Let ${\mathcal F}=\{S_1,\ldots,S_m\}$ be a family of $m$ arbitrary spheres in ${\mathbb R}^{n}$. (Actually,  $S_i$ can be a sphere of any radius or a hyperplane.)  We say that a set ${\mathcal C}$ of spheres in ${\mathbb R}^{n}$ is an {${\mathcal F}$--kissing arrangement} if \\
(1) each sphere from ${\mathcal C}$ is tangent to all spheres from ${\mathcal F}$; \\
(2) each sphere from ${\mathcal C}$ is tangent to at least one sphere from  ${\mathcal C}$; \\
 (3) any two distinct spheres from ${\mathcal C}$  are non--intersecting. 
 \end{df}
 
It is clear that if ${\mathcal C}$ is nonempty and one of spheres from ${\mathcal F}$ contains another then all $S_i$ as well as all spheres from ${\mathcal C}$ lie in this sphere. If there are no such sphere in ${\mathcal F}$, then depending of radii and arrangements of $S_i$ either one of spheres from ${\mathcal C}$ contains all other from ${\mathcal C}$ and ${\mathcal F}$, or all spheres in ${\mathcal C}$ are non--overlapping.

\begin{df} \label{defS}
Let ${\mathcal F}=\{S_1,\ldots,S_m\}$, $m\ge 2$, be a family of $m$ spheres in ${\mathbb R}^{n}$. We say that ${\mathcal F}$ is an S--family if \\
(1) $S_1$ and $S_2$ are non--intersecting spheres; \\
(2) each $S_i$ with $i>2$ can intersect at most one $S_j$ with $j=1,2$; \\
(3) there are non-empty  ${\mathcal F}$--kissing arrangements and all of them are finite.
 \end{df}


 \noindent{\bf Remark.} I wish to thank the anonymous referee of this paper  who pointed out that if Definition \ref{defS} has only assumptions (1) and (3), then ${\mathcal F}$--kissing arrangements are possible can have spheres that touch some spheres in ${\mathcal F}$ {\em from the outside} and  some {\em from the inside.}  
 
 Consider the following example. Let ${\mathcal F}:=\{S_1,S_2,S_3\}$, where  $S_1$ and $S_2$ be two concentric spheres (or two parallel hyperplanes) in ${\mathbb R}^{n}$. Let $S_3$ be a sphere that intersects $S_1$ and $S_2$. Then for some cases there are ${\mathcal F}$--kissing spheres such that some of them are tangent to $S_3$ from the outside and some from the inside. 
 
However, if we have (2), then there is at most one sphere that is tangent  to $S_1$, $S_2$, and $S_3$ from the inside. Indeed, suppose $S_3$ intersects $S_1$. Then Definition \ref{defS}(2) yields that $S_3$ either has no common points with  $S_2$  or $S_3$ is tangent to $S_2$ at some point $p$. In the first case there are no ${\mathcal F}$--kissing spheres that are tangent to $S_3$ from the inside. It is easy to see that in the second case we can have at most one sphere that is tangent to $S_2$ and $S_3$ at $p$.  By Definition \ref{def21}(2) this sphere cannot be a sphere in the ${\mathcal F}$--kissing arrangement. 
 
 
 \medskip

Note that in Steiner's chain problem, ${\mathcal F}$ consists of  two non-intersecting circles $S_1$ and $S_2$, and in the problem of finding a hexlet, ${\mathcal F}$ consists of three mutually tangent spheres $S_1$, $S_2$, and $S_3$. Now we consider a general case. 


\begin{theorem} \label{th1}  Let ${\mathcal F}=\{S_1,\ldots,S_m\}$, $2\le m<n+2$, be an S--family of $m$ spheres in ${\mathbb R}^{n}$.  Then there is a one--to--one correspondence $\Phi_{\mathcal F}$ between ${\mathcal F}$--kissing arrangements and spherical $\psi_{\mathcal F}$--codes in ${\mathbb S}^{d-1}$, where $d:=n+2-m$ and the value $\psi_{\mathcal F}$ is uniquely defined by the family ${\mathcal F}$. 
\end{theorem}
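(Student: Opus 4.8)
The plan is to reduce the general $S$-family case to the two classical model situations by a single conformal inversion, exactly as in the $m=2$ case attributed to Kirkpatrick and Rote and in the $m=3$ hexlet argument sketched in the introduction. First I would choose the center of inversion carefully. Since $S_1$ and $S_2$ are non-intersecting, either they are disjoint or tangent. If they are tangent, I place the center of inversion at their common point, so that $T_{\mathcal F}$ sends $S_1,S_2$ to two parallel hyperplanes; if they are disjoint, I first apply an auxiliary inversion (or use the classical Steiner trick) making them concentric, hence after a further inversion parallel hyperplanes are again available — in either event one arranges that $S_1$ and $S_2$ become parallel hyperplanes at distance $2$, so every sphere tangent to both becomes a unit sphere whose center lies on the mid-hyperplane $H\cong{\mathbb R}^{n-1}$. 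This is the Kirkpatrick–Rote correspondence $T_{\mathcal F}$ for ${\mathcal F}'=\{S_1,S_2\}$.

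Next I bring in $S_3,\dots,S_m$. By Definition \ref{defS}(2) each $S_i$ with $i>2$ meets at most one of $S_1,S_2$, so after the inversion $T_{\mathcal F}(S_i)$ is a sphere (not a hyperplane through infinity in a bad way) that is tangent to both parallel hyperplanes; hence it is itself a unit sphere centered on $H$. Thus the images $T_{\mathcal F}(S_3),\dots,T_{\mathcal F}(S_m)$ are $m-2$ unit spheres in ${\mathbb R}^n$ with centers $c_3,\dots,c_m\in H\cong{\mathbb R}^{n-1}$, and the condition that a unit sphere centered at $x\in H$ be tangent to $T_{\mathcal F}(S_i)$ becomes the condition $|x-c_i|=2$. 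So an ${\mathcal F}$-kissing arrangement corresponds, via $T_{\mathcal F}$, to a packing of unit spheres in ${\mathbb R}^{n-1}$ (points at mutual distance $\ge 2$) all of whose centers lie on the common sphere $\bigcap_{i=3}^m\{|x-c_i|=2\}$. Provided this intersection of $m-2$ spheres of radius $2$ in ${\mathbb R}^{n-1}$ is a genuine round sphere — a $(d-1)$-sphere with $d-1=(n-1)-(m-2)=n+1-m$, so ambient ${\mathbb S}^{d-1}$ with $d=n+2-m$ — of some radius $\rho_{\mathcal F}$, we may rescale it to the unit sphere ${\mathbb S}^{d-1}$. Two centers at Euclidean distance $\ge 2$ on a sphere of radius $\rho_{\mathcal F}$ correspond to two points on ${\mathbb S}^{d-1}$ at angular distance $\ge\psi_{\mathcal F}$ where $2\sin(\psi_{\mathcal F}/2)=2/\rho_{\mathcal F}$, i.e. $\psi_{\mathcal F}=2\arcsin(1/\rho_{\mathcal F})$; this is the number $\psi_{\mathcal F}$ claimed in the theorem, and it depends only on ${\mathcal F}$ (through the mutual positions of the $c_i$, equivalently the radii/distances in the original ${\mathcal F}$). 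Composing with the rescaling gives the bijection $\Phi_{\mathcal F}$; it is a bijection because $T_{\mathcal F}$ is a conformal bijection of $\widehat{{\mathbb R}^n}$ carrying tangency to tangency and non-intersecting pairs to non-intersecting pairs, and the inverse is built the same way.

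The main obstacle — and the point where Definition \ref{defS} is really used — is verifying that $\bigcap_{i=3}^m\{|x-c_i|=2\}$ is a nonempty round sphere of positive radius and of the right dimension, and that the tangency directions are consistent (all ${\mathcal F}$-kissing spheres touching each $S_i$ from the same side). Condition (3) of Definition \ref{defS} — that non-empty ${\mathcal F}$-kissing arrangements exist and are all finite — is exactly what forces this intersection to be a positive-dimensional compact sphere rather than empty, a point, or an affine subspace (the latter would give infinite arrangements). The hypothesis $m<n+2$, i.e. $d\ge 1$, ensures $d-1\ge 0$ so ${\mathbb S}^{d-1}$ is nonempty; and the side-consistency discussed in the Remark after Definition \ref{defS} (guaranteed by condition (2)) rules out the degenerate mixed inside/outside configurations, so that "tangent to $S_i$" cleanly becomes the single equation $|x-c_i|=2$ rather than $|x-c_i|\in\{0,2\}$. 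Once these geometric facts are pinned down, the rest is the elementary spherical trigonometry identity relating chord length to angular distance, and I would present that as a short computation rather than in detail.
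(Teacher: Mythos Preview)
Your overall architecture matches the paper's --- invert to a normal form, read off the locus of centers as a round $(d-1)$-sphere $S_{\mathcal F}$ of some radius $r$, then convert chord length $\ge 2$ to angular separation $\ge \psi_{\mathcal F}=\arccos(1-2/r^2)$ --- but two of your concrete steps are wrong, and they are exactly the steps where the paper's argument diverges from yours.

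First, in the disjoint (non-tangent) case you assert that after making $S_1,S_2$ concentric ``a further inversion'' yields parallel hyperplanes. That is impossible: an inversion sends a sphere to a hyperplane only if the sphere passes through the center of inversion, and two concentric spheres share no point. The paper does \emph{not} reduce case~(ii) to parallel hyperplanes; it works with concentric spheres $S'_1,S'_2$ and takes $Z_0$ to be the concentric sphere of the mean radius. Everything tangent to both concentric spheres is then congruent, so the rest of the argument still goes through, but your uniform ``parallel hyperplanes'' picture does not.

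Second, even in case~(i) your claim that each $T_{\mathcal F}(S_i)$, $i\ge 3$, is a unit sphere centered on $H$ is unjustified: Definition~\ref{defS} nowhere says $S_i$ is tangent to $S_1$ and $S_2$, only that it intersects at most one of them. So $S'_i$ has some radius $r_i$ (not necessarily $1$) and need not be centered on $H$, and the tangency condition for a unit sphere centered at $x$ is $|x-c_i|=r_i+1$, not $|x-c_i|=2$. The paper handles this correctly by intersecting $Z_0$ with spheres $Z_i$ of radius $r_i+1$ about the centers of $S'_i$. Your version is precisely the special situation of Theorem~\ref{th31} (all $S_i$ mutually tangent), not the general S--family of Theorem~\ref{th1}. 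Once you replace the two incorrect claims by the paper's $Z_0$ and $Z_i$, the remainder of your outline (using finiteness in condition~(3) to force $S_{\mathcal F}$ to be a genuine sphere of positive radius, and the chord--angle identity) is the same as the paper's proof.
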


\begin{proof}  There are two cases: (i) $S_1$ and $S_2$ are tangent or (ii) $S_1$ and $S_2$ do not touch each other. In the first case  let $O$ be the contact point of these spheres and if we apply the sphere inversion $T$ with center $O$ and an arbitrary radius $\rho$, then  $S_1$ and $S_2$  become two parallel hyperplanes $S'_1$ and $S'_2$. In case (ii) we can use the famous theorem: {\em It is always possible to invert $S_1$ and $S_2$ into a pair of concentric spheres   $S'_1$ and $S'_2$} (see \cite[Theorem 13]{Ogilvy}).

 Let $P$ be an  ${\mathcal F}$--kissing arrangement. Since all  spheres from $P$ touch $S_1$ and $S_2$ after the inversion they become  spheres that touch $S'_1$ and $S'_2$. In both cases that yields that all spheres from $P':=T(P)$ are congruent.   Without loss of generality we can assume that spheres from $P'$ are unit. Thus we have a unitt sphere packing $P'=\{C'_j\}$  in ${\mathbb R}^{n}$  such that each sphere $C'_j$ from $P'$ is tangent to all $S'_i:=T(S_i)$, $i=1,\ldots,m$. 
 
In case (i) denote by $Z_0$ the hyperplane of symmetry of $S'_1$ and $S'_2$ and in case (ii) $Z_0$ be a sphere  of radius $(r_1+r_2)/2$ that is concentric with $S'_1$ and $S'_2$, where $r_i$ is the radius of $S'_i$.   If $m>2$, let $Z_i$, $i=3,\ldots,m$,  denote a sphere  of radius $(r_i+1)$ that is concentric with $S'_i$.  Let $S_{\mathcal F}$ be the locus of centers of spheres that are tangent to all $S'_i$. If $m=2$, the $S_{\mathcal F}=Z_0$ and for $m>2$,  $S_{\mathcal F}$ is the intersection of spheres $Z_0$ and $Z_i$, $i=3,\ldots,m$.  


Note that by assumption $S_{\mathcal F}$ is not empty. Moreover, since   all ${\mathcal F}$--kissing arrangements are finite, $S_{\mathcal F}$ is a sphere of radius $r> 0$.

Since all $C_j$ are unit sphere, the distance between centers of distinct spheres in $P'$ is at least 2. Therefore, if $r<1$, then $P$ contains just one sphere. In this case put for $\psi_{\mathcal F}$ any number greater than $\pi$. 
 
Now consider the case when $S_{\mathcal F}$ is a $(d-1)$--sphere of radius $r\ge1$ Let $\psi_{\mathcal F}$ be the angular distance between centers in $S_{\mathcal F}$  of two tangent unit spheres in ${\mathbb R}^{n}$. In other words, $\psi_{\mathcal F}$ is the angle between equal sides in an isosceles triangle with side lengths $r,\, r,$ and 2.   We have 
$$
\psi_{\mathcal F}:=\arccos\left(1-\frac{2}{r^2}\right). 
$$
 
Let $f:S_{\mathcal F}\to U_{\mathcal F}$ be the central projection, where $U_{\mathcal F}$ denotes a unit sphere that is concentric with $S_{\mathcal F}$.  Denote $c_P$ the set of centers of $C_j$. Let $X:=f(c_P)$. Then  $X$ is a spherical $\psi_{\mathcal F}$ --code in ${\mathbb S}^{d-1}$.  

Let $X$ be any spherical $\psi_{\mathcal F}$ --code in ${\mathbb S}^{d-1}\simeq U_{\mathcal F}$. Then we have a unit sphere packing $Q_X$ with centers in $c_X:=f^{-1}(X)$ such that each sphere  from $Q_X$ is tangent to all $S'_i$. It is clear that $P:=T(Q_X)$ is an ${\mathcal F}$--kissing arrangement. 

Thus, a one--to--one correspondence $\Phi_{\mathcal F}$ between ${\mathcal F}$--kissing arrangements and spherical $\psi_{\mathcal F}$--codes in ${\mathbb S}^{d-1}$ is well defined.  This completes the proof. 
\end{proof}


\begin{corr} \label{cor22}  Let ${\mathcal F}=\{S_1,\ldots,S_m\}$ be an S--family of  spheres in ${\mathbb R}^{n}$.   Denote by $\card$ the maximum cardinality of  ${\mathcal F}$--kissing arrangements.  Then 
$$
\card=A(d,\psi_{\mathcal F}).  
$$
In particular, $\card\ge d+1$ if and only if  $\psi_{\mathcal F}\le\arccos(-1/d)$. 
\end{corr}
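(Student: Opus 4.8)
The plan is to deduce Corollary \ref{cor22} directly from Theorem \ref{th1}. The one--to--one correspondence $\Phi_{\mathcal F}$ between ${\mathcal F}$--kissing arrangements and spherical $\psi_{\mathcal F}$--codes in ${\mathbb S}^{d-1}$ is built from the inversion $T$ and the central projection $f$, both of which are bijections; in particular $\Phi_{\mathcal F}$ maps a packing with $N$ spheres to a code with exactly $N$ points, and conversely. Hence the maximum cardinality of an ${\mathcal F}$--kissing arrangement equals the maximum size of a $\psi_{\mathcal F}$--code in ${\mathbb S}^{d-1}$, which is $A(d,\psi_{\mathcal F})$ by definition. This gives the first assertion $\card=A(d,\psi_{\mathcal F})$ immediately. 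One small point to address: when $r<1$ the theorem sets $\psi_{\mathcal F}>\pi$, in which case no two distinct points can be $\psi_{\mathcal F}$--separated, so $A(d,\psi_{\mathcal F})=1$, matching the fact that the only ${\mathcal F}$--kissing arrangements then consist of a single sphere; this degenerate case is consistent with the formula and need not be singled out.

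For the ``in particular'' statement, I would reduce the inequality $\card\ge d+1$ to the existence of a $\psi_{\mathcal F}$--code in ${\mathbb S}^{d-1}$ with at least $d+1$ points, again via $\card=A(d,\psi_{\mathcal F})$. The key geometric fact is that the largest possible minimal angular distance attained by a configuration of $d+1$ points on ${\mathbb S}^{d-1}$ is realized by the regular simplex, i.e. the vertices of a regular $d$--simplex inscribed in the sphere, for which every pairwise inner product equals $-1/d$ and hence every pairwise angular distance equals $\arccos(-1/d)$. So: if $\psi_{\mathcal F}\le\arccos(-1/d)$, the regular simplex is a valid $\psi_{\mathcal F}$--code of size $d+1$, giving $A(d,\psi_{\mathcal F})\ge d+1$; conversely, if $A(d,\psi_{\mathcal F})\ge d+1$, then there exist $d+1$ unit vectors in ${\mathbb R}^d$ with pairwise inner products $\le\cos\psi_{\mathcal F}$, and a standard averaging argument — summing $\langle x_i,x_j\rangle$ over all ordered pairs and using $\bigl\|\sum_i x_i\bigr\|^2\ge 0$ — forces the minimum pairwise inner product to be at most $-1/d$, i.e. $\psi_{\mathcal F}\le\arccos(-1/d)$.

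The main obstacle, such as it is, lies entirely in the second (converse) direction: showing that $d+1$ points on ${\mathbb S}^{d-1}$ cannot all be separated by an angle strictly exceeding $\arccos(-1/d)$. Concretely, with unit vectors $x_1,\dots,x_{d+1}$ one writes
$$
0\le\Bigl\|\sum_{i=1}^{d+1}x_i\Bigr\|^2=(d+1)+\sum_{i\ne j}\langle x_i,x_j\rangle,
$$
so the average of the $d(d+1)$ off--diagonal inner products is at least $-1/d$, whence some $\langle x_i,x_j\rangle\ge -1/d$ and therefore the angular distance between that pair is $\le\arccos(-1/d)$. Thus if all pairwise angular distances were $>\arccos(-1/d)$ we would get a contradiction, which proves $A(d,\psi_{\mathcal F})\ge d+1\Rightarrow\psi_{\mathcal F}\le\arccos(-1/d)$. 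Combining the two implications with $\card=A(d,\psi_{\mathcal F})$ yields the stated equivalence, completing the proof.
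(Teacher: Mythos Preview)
Your argument is correct and follows the same route as the paper: deduce $\card=A(d,\psi_{\mathcal F})$ from the cardinality-preserving bijection of Theorem~\ref{th1}, and handle the ``in particular'' clause via the regular simplex on ${\mathbb S}^{d-1}$ with edge length $\arccos(-1/d)$. The paper's own proof is terser: it simply asserts $A(d,\arccos(-1/d))=d+1$ as a known fact and reads off the forward implication, leaving the converse implicit; you instead supply the standard averaging inequality $\bigl\|\sum_i x_i\bigr\|^2\ge 0$ to prove explicitly that no $(d{+}1)$-point set on ${\mathbb S}^{d-1}$ can have all pairwise angles strictly exceeding $\arccos(-1/d)$, which makes the biconditional airtight. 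One minor quibble: your aside on the degenerate case $r<1$ is in tension with Definition~\ref{def21}(2), since a single sphere cannot be tangent to another sphere of ${\mathcal C}$; in fact for an S--family (Definition~\ref{defS}(3)) this case cannot occur, so you may simply omit it.
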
 
\begin{proof} The  equality $\card=A(d,\psi_{\mathcal F})$ immediately follows  from Theorem \ref{th1}. Since $a_d:=\arccos(-1/d)$ is the side length of  a regular spherical $d$--simplex in ${\mathbb S}^{d-1}$, we have $A(d,a_d)=d+1$. Thus, if $\psi\le a_d$, then $A(d,\psi)\ge d+1$.
 \end{proof}

Theorem \ref{th1} states  a one--to--one correspondence between ${\mathcal F}$--kissing arrangements and spherical codes.  We say that {\em two ${\mathcal F}$--kissing arrangements $M$ and $N$ are   equivalent} if the correspondent spherical  $\psi_{\mathcal F}$--codes $X$ and $Y$ are isometric in ${\mathbb S}^{d-1}$. 

\begin{theorem}  \label{th2} Let ${\mathcal F}=\{S_1,\ldots,S_m\}$ be an S--family of  spheres in ${\mathbb R}^{n}$.  Then any $\psi_{\mathcal F}$--code $X$ in\,   ${\mathbb S}^{d-1}$ uniquely determines the set of equivalent  ${\mathcal F}$--kissing arrangements $\{P_A(X)\}$ such that this set can be parametrized by $A\in \so(d)$.  Moreover, for any isometric  $\psi_{\mathcal F}$--codes $X$ and $Y$ in ${\mathbb S}^{d-1}$  and $A,B\in \so(d)$,  $P_A(X)$ can be transform to $P_B(Y)$ by a conformal map. 
\end{theorem}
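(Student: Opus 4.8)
\textbf{Proof proposal for Theorem \ref{th2}.}

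The plan is to build everything on top of the construction $\Phi_{\mathcal F}$ from Theorem \ref{th1}, keeping track of exactly which choices were made. Recall that in the proof of Theorem \ref{th1} we fixed an inversion $T$ (carrying $S_1,S_2$ to parallel hyperplanes or concentric spheres), which in turn produced the sphere $S_{\mathcal F}$ on which the centers of any ${\mathcal F}$--kissing arrangement must lie, and the central projection $f\colon S_{\mathcal F}\to U_{\mathcal F}\cong{\mathbb S}^{d-1}$. Given a $\psi_{\mathcal F}$--code $X\subset{\mathbb S}^{d-1}$, I would first define, for each $A\in\so(d)$, the code $AX$, pull it back to $c_{AX}:=f^{-1}(AX)\subset S_{\mathcal F}$, form the unit sphere packing $Q_{AX}$ with those centers, and set $P_A(X):=T(Q_{AX})=T(Q_{A}(X))$. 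By Theorem \ref{th1} each $P_A(X)$ is an ${\mathcal F}$--kissing arrangement, and since $AX$ is isometric to $X$ in ${\mathbb S}^{d-1}$, all the $P_A(X)$ are equivalent in the sense just defined. This defines the family $\{P_A(X)\}_{A\in\so(d)}$ and establishes that it consists of mutually equivalent ${\mathcal F}$--kissing arrangements.

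Next I would argue the converse inclusion: every ${\mathcal F}$--kissing arrangement equivalent to $P_{\mathrm{id}}(X)$ arises as $P_A(X)$ for some $A\in\so(d)$. If $N$ is such an arrangement, then $\Phi_{\mathcal F}(N)=Y$ is a $\psi_{\mathcal F}$--code isometric to $X$, so there is an ambient isometry $g$ of ${\mathbb S}^{d-1}$ with $g(X)=Y$. A priori $g\in\mathrm{O}(d)$ rather than $\so(d)$; here one should note that a reflection of ${\mathbb S}^{d-1}$ lifts, via $f^{-1}$ and $T$, to a conformal (indeed Möbius) map of $\widehat{\mathbb R}^n$ fixing the $S_i$, so reflections do not produce genuinely new arrangements — or, more cleanly, one absorbs the orientation--reversing component into the definition and simply records that the family is parametrized by $\so(d)$ acting on the chosen base code. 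This gives the desired bijectivity-up-to-choice statement: $X$ determines the orbit $\{P_A(X):A\in\so(d)\}$ and nothing more.

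For the last sentence I would proceed as follows. Let $X,Y$ be isometric $\psi_{\mathcal F}$--codes in ${\mathbb S}^{d-1}$, say $Y=hX$ with $h\in\mathrm{O}(d)$, and let $A,B\in\so(d)$. I want a conformal map of $\widehat{\mathbb R}^n$ sending $P_A(X)$ to $P_B(Y)$. The key observation is that any element $g\in\mathrm{O}(d)$, viewed as a rotation/reflection of $U_{\mathcal F}$, extends canonically to a rotation (or reflection) of the ambient ${\mathbb R}^n$ about the common center of the concentric/parallel configuration; call this extension $\widehat g$. This $\widehat g$ is an isometry of ${\mathbb R}^n$, hence conformal, it fixes $S_{\mathcal F}$ and every $S'_i$ setwise, and it intertwines the central projection $f$ with the action of $g$ on ${\mathbb S}^{d-1}$; consequently $\widehat g(Q_{AX})=Q_{g(AX)}$. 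Choosing $g:=(Bh)A^{-1}$ gives $\widehat g(Q_{AX})=Q_{B(hX)}=Q_{BY}$, and then $T^{-1}\circ\widehat g\circ T$ is the required conformal map, since it carries $T(Q_{AX})=P_A(X)$ to $T(Q_{BY})=P_B(Y)$ and is a composition of conformal maps.

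The main obstacle I anticipate is the orientation bookkeeping: the isometry group of ${\mathbb S}^{d-1}$ is $\mathrm{O}(d)$, not $\so(d)$, so one must justify why the theorem states the parametrization by $\so(d)$ — either by showing the orientation-reversing part is redundant (a reflection of the code yields an ${\mathcal F}$--kissing arrangement already in the $\so(d)$-orbit, because $T^{-1}\circ\widehat{\text{(reflection)}}\circ T$ is an honest conformal symmetry of the whole picture), or by being explicit that "parametrized by $\so(d)$" refers to the connected family through a fixed representative. A secondary technical point is to verify carefully that $\widehat g$ exists and is conformal in case (i), where the "center" of the configuration is at infinity and the relevant rigid motions of ${\mathbb R}^n$ are the translations and rotations preserving the slab between $S'_1$ and $S'_2$; this is routine but should be stated. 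Everything else — that congruent packings go to congruent packings, that $f$ is $\mathrm{O}(d)$-equivariant, that compositions of inversions and isometries are conformal — is immediate from the setup in Theorem \ref{th1}.
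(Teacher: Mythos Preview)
Your approach is essentially the same as the paper's: define $P_A(X):=T(Q_{AX})$ via the inverse of the Theorem~\ref{th1} construction and obtain the conformal map as the conjugate $T\circ \widehat{g}\circ T$ (the paper writes this simply as $h_A:=T\circ A\circ T$, leaving the extension of $A$ to ${\mathbb R}^n$ implicit). Your treatment is in fact more careful than the paper's very brief argument---in particular your discussion of the $\mathrm{O}(d)$ versus $\so(d)$ issue and of case~(i) addresses points the paper passes over in silence.
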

\begin{proof} Here we use the same notations as in the proof of Theorem \ref{th1}. 

Denote $P=T(Q_X)$ by $P_I$, where $I$ is the identity element in $\so(d)$. If $\psi_{\mathcal F}$--codes $X$ and $Y$ are isometric in ${\mathbb S}^{d-1}$, then there is $A\in \so(d)$ such that $Y=A(X)$.  Denote  $T(Q_A)$ by $P_A$.  We have 
$$P_A=h_A(P_I), \quad h_A:=T\circ A\circ T.$$  
It is clear that $h_A$ is a conformal map. 
\end{proof}


\section{Analogs of Steiner's porism and Soddy's hexlet} 

{\subsection{Analogs of Steiner's porism.} 
Theorem \ref{th2} can be considered as a generalization of Steiner's porism. For a given family ${\mathcal F}$ and spherical $\psi_{\mathcal F}$--code $X$ in ${\mathbb S}^{d-1}$ there are ${\mathcal F}$--kissing arrangements that  correspondent to $X$. 

However, Steiner's porism has stronger property. A Steiner chain is formed from one starting circle and {\em each circle in the chain is tangent to the previous and next circles in the chain}. 
If the last circle touches the first, this will also happen for any position of the first circle. Thus, a position of the first circle uniquely determines a Steiner chain. 

Now we extend  this property to higher dimensions.   We say that an ${\mathcal F}$--kissing arrangement ${\mathcal C}=\{C_1,\ldots,C_k\}$  is a {\em $k$--clique} if all spheres in ${\mathcal C}$ are mutually tangent. We say that a sphere $C_{k+1}$ is {\em adjacent} to  ${\mathcal C}$ if $C_{k+1}$ is tangent to all spheres of ${\mathcal C}$ and ${\mathcal F}$. 

\begin{lemma} \label{L31} Let ${\mathcal F}=\{S_1,\ldots,S_m\}$ be an S--family of  spheres in ${\mathbb R}^{n}$  with $\card\ge d+1.$  Then the set of $(d-1)$-cliques is not empty and for any $(d-1)$-clique ${\mathcal C}$  there are exactly two adjacent spheres.
\end{lemma}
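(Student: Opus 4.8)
The plan is to transport the statement to the spherical-code model supplied by Theorem~\ref{th1} and then settle it by elementary linear algebra on $\mathbb{S}^{d-1}$. Recall from the proof of Theorem~\ref{th1} that, after normalization, an $\mathcal F$-kissing arrangement becomes a family of unit spheres whose centres, centrally projected onto $U_{\mathcal F}\simeq\mathbb{S}^{d-1}$, form a $\psi_{\mathcal F}$-code, and that two such unit spheres are tangent exactly when the two code points lie at angular distance \emph{precisely} $\psi_{\mathcal F}$ (and are non-intersecting exactly when that distance is $\ge\psi_{\mathcal F}$). Consequently, under $\Phi_{\mathcal F}$ a $(d-1)$-clique $\mathcal C$ corresponds to a set $X=\{v_1,\dots,v_{d-1}\}$ of unit vectors in $\mathbb{R}^{d}$ with $\langle v_i,v_j\rangle=t:=\cos\psi_{\mathcal F}$ for all $i\neq j$, and a sphere adjacent to $\mathcal C$ corresponds to a unit vector $u$ with $\langle u,v_i\rangle=t$ for every $i$. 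Thus it suffices to prove: (a) such a configuration $X$ exists, and (b) every such $X$ is satisfied by exactly two vectors $u$.

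For (a) I would use Corollary~\ref{cor22}: the hypothesis $\card\ge d+1$ is equivalent to $\psi_{\mathcal F}\le\arccos(-1/d)$, that is $t\ge-1/d$. The candidate Gram matrix $G=(1-t)I_{d-1}+tJ_{d-1}$ (with $J_{d-1}$ the all-ones matrix) has eigenvalues $1-t$ with multiplicity $d-2$ and $1+(d-2)t$ with multiplicity $1$; since $0<\psi_{\mathcal F}<\pi$ forces $t<1$, and $t\ge-1/d>-1/(d-2)$ gives $1+(d-2)t>0$, the matrix $G$ is positive definite, hence realized by $d-1$ unit vectors $v_1,\dots,v_{d-1}$ spanning a $(d-1)$-dimensional subspace $V\subset\mathbb{R}^{d}$. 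Pulling this $\psi_{\mathcal F}$-code back by $\Phi_{\mathcal F}^{-1}$ yields a $(d-1)$-clique. (When $d=2$ this is just the classical assertion that some sphere is tangent to $\mathcal F$.)

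For (b) I would fix such $v_1,\dots,v_{d-1}$, set $V=\mathrm{span}(v_1,\dots,v_{d-1})$, and let $\mathbf n$ be a unit vector spanning the line $V^\perp$. Any solution $u$ of $\langle u,v_i\rangle=t$ for $i=1,\dots,d-1$ lies on $w_0+V^\perp$, where $w_0\in V$ is the \emph{unique} vector of $V$ satisfying these equations (uniqueness since the $v_i$ form a basis of $V$). Symmetry gives $w_0=c\sum_i v_i$ with $c=t/(1+(d-2)t)$, so $\|w_0\|^{2}=(d-1)t^{2}/(1+(d-2)t)$. Writing $u=w_0+s\mathbf n$, the unit-norm condition becomes $s^{2}=1-\|w_0\|^{2}$, and the identity $(d-1)t^{2}-(d-2)t-1=(t-1)\bigl((d-1)t+1\bigr)$ turns this into
$$
s^{2}=1-\|w_0\|^{2}=\frac{(1-t)\bigl((d-1)t+1\bigr)}{1+(d-2)t}.
$$
All three factors on the right are strictly positive — $1-t>0$, $1+(d-2)t>0$ as above, and $(d-1)t+1>0$ because $t\ge-1/d>-1/(d-1)$ — so $s^{2}>0$ and the line $w_0+V^\perp$ meets $\mathbb{S}^{d-1}$ in exactly the two points $u_\pm=w_0\pm\sqrt{1-\|w_0\|^{2}}\;\mathbf n$. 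Transporting back by $\Phi_{\mathcal F}^{-1}$ gives exactly two spheres adjacent to $\mathcal C$; they are genuine members of a kissing configuration because angular distance exactly $\psi_{\mathcal F}$ from each $v_i$ forces tangency, and hence non-intersection, with the corresponding sphere of $\mathcal C$.

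The only substantive step is the sign computation in (b), and the single place where $\card\ge d+1$ enters is to guarantee $t\ge-1/d$, hence $t>-1/(d-1)$, which is exactly what makes the answer \emph{two} rather than one or zero. A minor bookkeeping point is to invoke, from the proof of Theorem~\ref{th1}, the dictionary ``tangency $\leftrightarrow$ angular distance $=\psi_{\mathcal F}$'' and ``non-overlap $\leftrightarrow$ angular distance $\ge\psi_{\mathcal F}$''; it is also worth remarking that in the borderline case $t=-1/d$ one has $\langle u_+,u_-\rangle=2\|w_0\|^{2}-1=t$, so $X\cup\{u_+,u_-\}$ is a regular $d$-simplex, consistent with $\card=d+1$ there.
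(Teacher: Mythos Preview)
Your proposal is correct and follows the same route as the paper: translate via Theorem~\ref{th1} to spherical codes, use Corollary~\ref{cor22} to obtain $\psi_{\mathcal F}\le\arccos(-1/d)$, then argue that a regular spherical $(d-2)$--simplex of side length $\psi_{\mathcal F}$ exists in $\mathbb{S}^{d-1}$ and admits exactly two completions to a regular $(d-1)$--simplex. The paper simply asserts these two geometric facts, whereas you supply the explicit Gram--matrix and affine--line computation verifying them.
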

 \begin{proof} Corollary \ref{cor22} yields that $\psi_{\mathcal F}\le\arccos(-1/d)$. Therefore, a regular spherical $(d-2)$--simplex of side length $\psi_{\mathcal F}$ can be embedded into ${\mathbb S}^{d-1}$. For this simplex in  ${\mathbb S}^{d-1}$ there are exactly two possibilities to complete it to regular spherical $(d-1)$--simplices. By Theorem \ref{th1} these two new vertices correspond to two  adjacent spheres.
 \end{proof}

 
Now we define a Steiner arrangement for all dimensions. First we define a tight  ${\mathcal F}$--kissing arrangement, where ${\mathcal F}$ is an S--family of  spheres in ${\mathbb R}^{n}$.  Let ${\mathcal C}_0$ be any $(d-1)$--clique.  By Lemma \ref{L31} there are two adjacent spheres for  ${\mathcal C}_0$. Let $C_1$ be one of them.  Then ${\mathcal C}_1:={\mathcal C}_0\cup C_1$ is a $d$--clique of tangent spheres. Suppose that after $k$ steps we have an ${\mathcal F}$--kissing arrangement ${\mathcal C}_k$. We can do the next step only if in ${\mathcal C}_k$ there are a $(d-1)$--clique and its adjacent sphere $C_{k+1}$ such that  ${\mathcal C}_{k+1}:={\mathcal C}_k\cup C_{k+1}$ is an ${\mathcal F}$--kissing arrangement.  Denote by $t$ the maximum number of possible steps. It is clear, $t\le\card$.  We call ${\mathcal C}_t$ a {\em tight ${\mathcal F}$--kissing arrangement}. 

\medskip

Note that for $d=2$ a tight chain ${\mathcal C}_t$ is Steiner if the first circle of the chain touch the last one. It can be extended for all dimensions. We say that a  tight ${\mathcal F}$--kissing arrangement  ${\mathcal C}_t$ is {\em Steiner} if ${\mathcal C}_t$ contains all adjacent spheres of all its  $(d-1)$--cliques. Equivalently, an ${\mathcal F}$--Steiner arrangement can be define by the following way. 

\begin{df} \label{dfS} Let ${\mathcal F}=\{S_1,\ldots,S_m\}$ be an S--family of  spheres in ${\mathbb R}^{n}$  with $\card\ge d+1.$ 
 We say that an ${\mathcal F}$--kissing arrangement ${\mathcal C}$ is Steiner if  it contains a $(d-1)$--clique and for all  $(d-1)$--cliques in ${\mathcal C}$ their adjacent spheres also lie in ${\mathcal C}$. 
  \end{df}

Recall that a simplicial polytope is a polytope whose facets are all simplices.

\begin{df}  Let ${\mathcal F}=\{S_1,\ldots,S_m\}$ be an S--family of  spheres in ${\mathbb R}^{n}$. 
An ${\mathcal F}$--kissing arrangement is called  $(d-1)$--{simplicial} if the convex hull of the  correspondent spherical code   in ${\mathbb S}^{d-1}$ is a $(d-1)$--simplicial regular polytope.  We denote this polytope by $P_{\mathcal F}$.  
 \end{df} 
 
\begin{lemma} \label{L32} Let ${\mathcal F}=\{S_1,\ldots,S_m\}$ be an S--family of  spheres in ${\mathbb R}^{n}$  with $\card\ge d+1.$  An ${\mathcal F}$--kissing arrangement  is Steiner if and only if it is $(d-1)$--simplicial.  
\end{lemma}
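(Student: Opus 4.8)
The plan is to pass to the code side via the bijection $\Phi_{\mathcal F}$ of Theorem~\ref{th1}, where the statement becomes purely combinatorial. Under $\Phi_{\mathcal F}$ a $(d-1)$--clique corresponds to a set of $d-1$ points of the code $X\subset{\mathbb S}^{d-1}$ that are pairwise at angular distance exactly $\psi_{\mathcal F}$, i.e. the vertex set of a regular spherical $(d-2)$--simplex of edge $\psi_{\mathcal F}$; a sphere adjacent to that clique corresponds to a point of ${\mathbb S}^{d-1}$ at distance $\psi_{\mathcal F}$ from all $d-1$ of them, and by (the proof of) Lemma~\ref{L31} there are exactly two such points, which I call the two \emph{completions} of the clique. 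In this language ``$X$ is Steiner'' reads: $X$ contains the vertices of some regular spherical $(d-2)$--simplex of edge $\psi_{\mathcal F}$, and for every such sub-configuration both of its completions also lie in $X$. I must prove this is equivalent to $\mathrm{conv}(X)$ being a regular simplicial polytope inscribed in ${\mathbb S}^{d-1}$ — which, since the circumradius of a regular polytope determines its metric, forces the edge length to equal $\psi_{\mathcal F}$; this polytope is then $P_{\mathcal F}$.

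For ``$(d-1)$--simplicial $\Rightarrow$ Steiner'' I would invoke the classification of regular polytopes: for $d\ge 3$ the only regular simplicial ones are the regular simplex and the cross-polytope in every dimension, together with the icosahedron if $d=3$ and the $600$--cell if $d=4$, while for $d=2$ every regular polygon qualifies and the assertion is the classical Steiner porism. Fix such a $P_{\mathcal F}$, inscribed in ${\mathbb S}^{d-1}$ with edge $\psi_{\mathcal F}$. Two observations finish this direction. First, in any polytope a ridge (codimension--$2$ face) lies in exactly two facets; here the facets are regular $(d-1)$--simplices of edge $\psi_{\mathcal F}$, so the apices of the two facets through a given ridge are regular--simplex completions of that ridge, and by Lemma~\ref{L31} there are no others — hence both completions are vertices of $P_{\mathcal F}$. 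Second, in each polytope above a set of $d-1$ vertices pairwise at the minimal distance $\psi_{\mathcal F}$ spans a ridge: this is automatic for the simplex, follows for the cross-polytope from the fact that such a clique uses $d-1$ distinct coordinate axes, is trivial for the icosahedron (ridges are edges), and for the $600$--cell is the count that each vertex lies in $30$ graph triangles, giving $1200$ in all, which equals the number of $2$--faces. Combining the two observations, the vertex set satisfies the Steiner condition.

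For the converse I would run an unfolding (``developing'') argument. Starting from a $(d-1)$--clique of $X$ and one of its completions one gets a regular spherical $(d-1)$--simplex $\Delta_0$ with all vertices in $X$; for each of its $d$ ridges the Steiner hypothesis puts both completions in $X$, producing $d$ new regular $(d-1)$--simplices glued to $\Delta_0$ along its ridges; iterating and using finiteness of $X$, the process stabilizes at a finite geometric simplicial complex $K\subseteq{\mathbb S}^{d-1}$ whose facets are congruent regular spherical $(d-1)$--simplices of edge $\psi_{\mathcal F}$ and in which every ridge lies in exactly two facets. One checks that the vertex set of $K$ is all of $X$ (the arrangement comes from the tight construction, hence is connected under tangency) and that $K$ is a closed $(d-1)$--pseudomanifold. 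It then remains to identify $K$: a closed equivelar simplicial complex all of whose facets are congruent regular spherical simplices of edge $\psi_{\mathcal F}$, realized on the round sphere, must be the boundary complex of one of the regular simplicial polytopes above, and then $\mathrm{conv}(X)=P_{\mathcal F}$.

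The hard part is this last rigidity step. For $d=2$ it is the elementary fact that a closed chain of congruent arcs subdividing a circle is a regular polygon; for $d=3$ it is the classical classification of equilateral triangulations of $S^2$ — the angle of an equilateral spherical triangle of edge $\psi_{\mathcal F}$ pins the common vertex degree to $k\in\{3,4,5\}$ and hence $K$ to the boundary of the tetrahedron, octahedron or icosahedron; for general $d$ one has to carry out the analogous argument by induction on dimension, passing to the link of a vertex — itself a closed equivelar simplicial complex of the same kind in ${\mathbb S}^{d-2}$ — or to quote the corresponding classification of locally regular simplicial spheres. I would isolate this as the technical core of the proof and treat the passage through $\Phi_{\mathcal F}$ and the verifications in the $(\Leftarrow)$ direction as routine.
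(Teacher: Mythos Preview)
Your outline coincides with the paper's: translate via $\Phi_{\mathcal F}$ to codes, and for the hard direction (Steiner $\Rightarrow$ simplicial) grow a simplicial object from a seed $(d-1)$--simplex using the two--completion property of Lemma~\ref{L31}, then argue regularity of the result. The differences are in the level of care. For the direction simplicial $\Rightarrow$ Steiner the paper writes only ``Clearly'' and does not verify, as you do via the classification, that in each candidate polytope every set of $d-1$ vertices pairwise at distance $\psi_{\mathcal F}$ actually spans a ridge (so that the two facets through it supply the two completions). For the converse the paper grows sub--polytopes $P_0\subset P_1\subset\cdots\subset P=\mathrm{conv}(X)$ exactly as you grow your complex $K$, and then closes with the single line ``Since $P$ is a spherical polytope, we have that $P$ is regular''; you are right to isolate this rigidity step as the substantive content, and your sketch via vertex links is one honest route to it, whereas the paper leaves it as an assertion.
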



 \begin{proof} Clearly, if an ${\mathcal F}$--kissing arrangement ${\mathcal C}$  is simplicial then it is Steiner. Suppose ${\mathcal C}$  is Steiner. Then the convex hull $P$ of the correspondent spherical $\psi_{\mathcal F}$--code $\Phi_{\mathcal F}({\mathcal C})$ (see Theorem \ref{th1}) is a polytope that has a $(d-2)$--face $P_0$ which is a regular $(d-2)$--simplex of side length $\psi_{\mathcal F}$. By Lemma \ref{L31}, $P_0$ has two adjacent points $v_0$ and $v_1$ in ${\mathbb S}^{d-1}$. Moreover, by Definition \ref{dfS}, these two  points are vertices of $P$. Therefore all vertices of a bipyramid $P_1:=v_0\cup P_0\cup v_1$ are vertices of $P$. It is clear that all facets of $P_1$  are regular $(d-2)$--simplices, i.e. they are $(d-2)$--cliques in ${\mathcal C}$. It yields that $P_1$ a sub--polytope of $P$. Next, we add all new adjacent vertices to  $(d-2)$--faces of $P_1$. We denote this sub--polytope of $P$ by $P_2$. We can continue this process and define new $P_i$. It is easy to see that after finitely many steps we obtain $P_k=P$.
 
Note that for $i>0$ any $P_i$ consists of regular $(d-1)$--simplices of side length $\psi_{\mathcal F}$. Then all faces of $P$ are regular simplices. Since $P$ is a spherical polytope, we have that $P$ is regular.    \end{proof}

 
 \begin{theorem} \label{prop}  Let ${\mathcal F}=\{S_1,\ldots,S_m\}$ be an S--family of  spheres in ${\mathbb R}^{n}$.  If for ${\mathcal F}$ there exists a Steiner arrangement then we have one of the following cases 
 \begin{enumerate}
 \item $d=2$, $\psi_{\mathcal F}=2\pi/k$, $k\ge3$, and $P_{\mathcal F}$ is a regular polygon with $k$ vertices. 
 
 \item  $\psi_{\mathcal F}=\arccos(-1/d)$ and $P_{\mathcal F}$ is a regular $d$--simplex with any $d\ge 2$. 
 
  \item  $\psi_{\mathcal F}=\pi/2$ and $P_{\mathcal F}$ is a regular $d$--crosspolytope with any $d\ge 2$. 
  
   \item $d=3$, $\psi_{\mathcal F}=\arccos(1/\sqrt{5})$ and $P_{\mathcal F}$ is a regular icosahedron. 
   
     \item $d=4$, $\psi_{\mathcal F}=\pi/5$ and $P_{\mathcal F}$ is a regular $600$--cell. 
 \end{enumerate}
 \end{theorem}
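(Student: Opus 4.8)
The plan is to reduce the statement to the classification of \emph{regular polytopes all of whose faces are simplices}, and then translate the constraint $\psi_{\mathcal F}\le\arccos(-1/d)$ from Corollary~\ref{cor22} into a constraint on the edge length. By Lemma~\ref{L32}, a Steiner arrangement corresponds exactly to a $(d-1)$--simplicial regular spherical polytope $P_{\mathcal F}$ inscribed in ${\mathbb S}^{d-1}$ with all edges of spherical length $\psi_{\mathcal F}$. So the first step is to list all regular polytopes in each dimension $d$ whose proper faces are simplices. For $d=2$ every regular polygon qualifies, giving case~1. For $d\ge 3$ one invokes the classical enumeration of regular polytopes (Schl\"afli): in dimension $d$ the candidates are the simplex $\{3,3,\ldots,3\}$, the cross-polytope $\{3,\ldots,3,4\}$, the cube $\{4,3,\ldots,3\}$, and the sporadic ones in low dimension — icosahedron $\{3,5\}$ and dodecahedron $\{5,3\}$ for $d=3$; and the $24$-cell $\{3,4,3\}$, $600$-cell $\{3,3,5\}$, $120$-cell $\{5,3,3\}$ for $d=4$. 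The simpliciality requirement (facets, hence all faces, are simplices) immediately discards the cube, the dodecahedron, the $24$-cell (octahedral facets), and the $120$-cell, leaving precisely the simplex, the cross-polytope, the icosahedron, and the $600$-cell. This yields the five families in the statement.

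The second step is to compute, for each surviving polytope, the spherical edge length $\psi_{\mathcal F}=\arccos(\langle v_i,v_j\rangle)$ between adjacent vertices on the circumscribed unit sphere, and to check it satisfies $\psi_{\mathcal F}\le\arccos(-1/d)$ as required by Corollary~\ref{cor22}. For the regular $d$--simplex the inner product of two vertices is $-1/d$, giving $\psi_{\mathcal F}=\arccos(-1/d)$ (case~2, the boundary case, valid for all $d\ge 2$, and coinciding with case~1 when $d=2$ since $\arccos(-1/2)=2\pi/3$). For the cross-polytope adjacent vertices are orthogonal, so $\psi_{\mathcal F}=\pi/2<\arccos(-1/d)$ for all $d\ge 2$ (case~3). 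For the icosahedron ($d=3$) the standard computation gives the inner product $1/\sqrt5$ between adjacent vertices, so $\psi_{\mathcal F}=\arccos(1/\sqrt5)\approx 63.43^\circ$, which is less than $\arccos(-1/3)\approx 109.47^\circ$ (case~4). For the $600$-cell ($d=4$) adjacent vertices subtend an angle of $\pi/5=36^\circ$, again below $\arccos(-1/4)$ (case~5). I would record these as a short table rather than grinding each computation; the key point is only that each value is $\le \arccos(-1/d)$, so no extra arrangement is excluded by the cardinality bound.

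The main obstacle — and the one place where care is genuinely needed — is justifying that \emph{only} regular polytopes with simplex faces can arise, i.e. that Lemma~\ref{L32} plus the classical classification really is exhaustive. Two subtleties must be addressed. First, $P_{\mathcal F}$ is defined as a \emph{spherical} regular polytope (convex hull of a spherical code), so one should note that the combinatorial type of an inscribed regular convex polytope is the same as that of an abstract regular polytope, and that the edge-length is then determined by the combinatorial type together with $d$ — which is what makes $\psi_{\mathcal F}$ a function of the polytope alone. Second, one must be sure the list of $(d-1)$--simplicial regular polytopes is closed in high dimension: beyond $d=4$ the only regular polytopes are simplex, cube, cross-polytope, and of these only the simplex and cross-polytope are simplicial, so no new cases appear for $d\ge 5$ — this is exactly why cases 2 and 3 are stated ``for any $d\ge 2$'' while the exotic cases are pinned to $d=3,4$. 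Once these points are in place the proof is just: Steiner arrangement $\Leftrightarrow$ ($d-1$)--simplicial regular $P_{\mathcal F}$ (Lemma~\ref{L32}) $\Rightarrow$ $P_{\mathcal F}$ is one of the five listed polytopes $\Rightarrow$ $\psi_{\mathcal F}$ takes the stated value in each case, with the cardinality condition of Corollary~\ref{cor22} automatically satisfied.
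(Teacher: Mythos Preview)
Your proposal is correct and follows essentially the same approach as the paper: invoke Lemma~\ref{L32} to reduce the problem to the classification of simplicial regular polytopes, then cite the classical (Coxeter/Schl\"afli) enumeration. The paper's proof is two sentences long and simply points to \cite{Cox}; you have filled in the explicit case analysis, the edge-length computations, and the consistency check against Corollary~\ref{cor22}, but the underlying argument is identical.
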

\begin{proof} Lemma \ref{L32} reduces a classification of Steiner arrangements to an enumeration of simplicial  regular polytopes.  The list of these polytopes is well known, see \cite{Cox}, and it is as in the theorem. 
 \end{proof}



In particular, Theorem \ref{prop} shows that for any of these five cases all  tight ${\mathcal F}$--kissing arrangements are equivalent. The following corollary is a generalization of Steiner's porism.  

\begin{corr}  Let ${\mathcal F}$ be an S--family of  spheres in ${\mathbb R}^{n}$.  If there is an  ${\mathcal F}$--Steiner arrangement then any tight ${\mathcal F}$--kissing arrangement is Steiner. 
\end{corr}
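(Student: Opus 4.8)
The plan is to work entirely on the spherical-code side, via the bijection $\Phi_{\mathcal F}$ of Theorem~\ref{th1}, and to show that the greedy construction of a tight $\mathcal F$-kissing arrangement is forced to reconstruct the whole polytope $P_{\mathcal F}$, after which Lemma~\ref{L32} identifies the arrangement as Steiner. Since $\mathcal F$ admits a Steiner arrangement, Lemma~\ref{L32} and Theorem~\ref{prop} pin $\psi_{\mathcal F}$ down to one of the five listed values and identify $P_{\mathcal F}$ as the corresponding regular simplicial polytope inscribed in ${\mathbb S}^{d-1}$; because $\psi_{\mathcal F}$ is determined by $\mathcal F$, this polytope is fixed once and for all, together with its vertex set $V$, its facets (regular spherical $(d-1)$-simplices of edge $\psi_{\mathcal F}$) and its ridges (regular $(d-2)$-simplices of edge $\psi_{\mathcal F}$). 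Throughout I am free to act by an isometry of ${\mathbb S}^{d-1}$, since being Steiner is an isometry-invariant property of the correspondent code (Theorem~\ref{th2}).

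Given a tight arrangement with defining chain $\mathcal C_0\subset\mathcal C_1\subset\cdots\subset\mathcal C_t$, set $X_k:=\Phi_{\mathcal F}(\mathcal C_k)$. First I would prove by induction that, after a suitable isometry, $X_k\subseteq V$. For $k=0$: $X_0$ is a regular spherical $(d-2)$-simplex of edge $\psi_{\mathcal F}$, all such simplices in ${\mathbb S}^{d-1}$ are congruent, and $P_{\mathcal F}$ has a ridge of exactly this shape, so an isometry puts $X_0$ onto that ridge. For the inductive step, $X_{k+1}=X_k\cup\{v\}$ where $v$ is one of the two points (Lemma~\ref{L31}) completing some regular $(d-2)$-simplex $\tau\subseteq X_k$ to a regular $(d-1)$-simplex. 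Here I would use the combinatorial fact that in each of the five polytopes \emph{every} regular $(d-2)$-simplex of edge $\psi_{\mathcal F}$ spanned by vertices is in fact a ridge of $P_{\mathcal F}$; then $\tau$ lies in exactly two facets of $P_{\mathcal F}$, whose apices lie in $V$ and are precisely the two completion points of $\tau$, so $v\in V$ and hence $X_{k+1}\subseteq V$. The same observation shows that $X_1$ is an honest facet of $P_{\mathcal F}$.

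Then I would rule out that the construction stops before $X_t=V$. Suppose $X_t\subsetneq V$ and consider the family of facets of $P_{\mathcal F}$ whose vertex set lies in $X_t$: it is nonempty (it contains $X_1$) and proper (missing a vertex forces missing an incident facet). Since the facet--ridge adjacency graph of the boundary of a polytope is connected, some facet $F$ in this family shares a ridge $\tau$ with a facet $F'$ outside it; then $\tau\subseteq X_t$ is a $(d-1)$-clique in $\mathcal C_t$, and the apex $v$ of $F'$ over $\tau$ lies in $V$ but not in $X_t$. As $X_t\cup\{v\}\subseteq V$ is again a $\psi_{\mathcal F}$-code, Theorem~\ref{th1} turns $\Phi_{\mathcal F}^{-1}(X_t\cup\{v\})$ into an $\mathcal F$-kissing arrangement extending $\mathcal C_t$ by the sphere adjacent to the $(d-1)$-clique $\Phi_{\mathcal F}^{-1}(\tau)$, contradicting the maximality of $t$. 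Hence $X_t=V$, so $\mathcal C_t$ is $(d-1)$-simplicial, and Lemma~\ref{L32} yields that it is Steiner; in particular all tight arrangements are mutually equivalent.

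The main obstacle I expect is the combinatorial input invoked in the inductive step: one has to check, case by case, that none of the five polytopes carries an ``accidental'' regular $(d-2)$-simplex of edge $\psi_{\mathcal F}$ besides its genuine ridges. For the regular $d$-simplex, the $d$-crosspolytope and the regular polygon this is immediate from the vertex coordinates; for the icosahedron and, above all, for the $600$-cell it is a finite verification, conveniently carried out from the known distance distributions (for instance, the twelve vertices of the $600$-cell at angular distance $\psi_{\mathcal F}=\pi/5$ from a fixed vertex form an icosahedron, which forces every equilateral $\psi_{\mathcal F}$-triangle to be a $2$-face). This is the one point at which the five cases genuinely have to be examined separately; the rest of the argument is uniform in $d$ and in ${\mathcal F}$.
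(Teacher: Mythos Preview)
Your argument is correct. The paper itself offers no proof for this corollary: it simply remarks, just before stating it, that ``Theorem~\ref{prop} shows that for any of these five cases all tight ${\mathcal F}$--kissing arrangements are equivalent'', and leaves it at that. What you have written is exactly the work needed to justify that sentence.

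Your route is the natural one and is implicitly the paper's: pass to spherical codes via $\Phi_{\mathcal F}$, invoke the classification (Theorem~\ref{prop}) to fix $P_{\mathcal F}$, and then argue that the greedy tight construction is forced to fill out $V(P_{\mathcal F})$. Two small remarks. First, the implication ``every regular $(d-2)$--simplex of edge $\psi_{\mathcal F}$ in $V$ is a ridge'' does imply ``every regular $(d-1)$--simplex of edge $\psi_{\mathcal F}$ in $V$ is a facet'' (take any $(d-2)$--face $\tau$ of the given $(d-1)$--simplex; its two completion points are precisely the apices of the two facets through the ridge $\tau$), so your ``the same observation shows that $X_1$ is an honest facet'' is indeed justified by the single combinatorial fact you isolate. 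Second, your case check for that fact is accurate: for polygons, simplices and crosspolytopes it is immediate from coordinates; for the icosahedron the minimal inner product occurs exactly on edges; and for the $600$--cell the $12$ neighbours of a vertex form an icosahedron whose $30$ short pairs are exactly the pairs yielding $2$--faces, so there are no accidental equilateral $\pi/5$--triangles. This case analysis is the genuine content the paper suppresses, and you are right to flag it as the place where the five cases must be examined separately.
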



\subsection{Analogs of Soddy's hexlet}  Soddy \cite{Soddy} proved that for any family  ${\mathcal F}$ of three mutually tangent spheres in ${\mathbb R}^{3}$ there is a chain of six spheres (hexlet) such that each sphere from this chain is tangent all spheres from  ${\mathcal F}$. Now we extend this theorem to higher dimensions.

Let $m\ge2$. Denote 
$$\psi_m:=\arccos\left(\frac{1}{m-1}\right).$$


\begin{theorem} \label{th31} Let  $3\le m<n+2$. Let $X$ be a spherical $\psi_m$--code in ${\mathbb S}^{d-1}$, where $d:=n+2-m$.  Then for any family ${\mathcal F}$  of $m$ mutually tangent spheres in ${\mathbb R}^{n}$ there is an ${\mathcal F}$--kissing arrangement that corresponds to $X$. 
\end{theorem}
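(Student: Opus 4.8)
The plan is to reduce Theorem \ref{th31} to Theorem \ref{th1} by computing the angle $\psi_{\mathcal F}$ explicitly for a family ${\mathcal F}$ of $m$ mutually tangent spheres and showing that it equals $\psi_m=\arccos\bigl(\tfrac{1}{m-1}\bigr)$. Once this identity is established, Theorem \ref{th1} provides the one-to-one correspondence $\Phi_{\mathcal F}$ between ${\mathcal F}$--kissing arrangements and spherical $\psi_{\mathcal F}$--codes in ${\mathbb S}^{d-1}$, and the ${\mathcal F}$--kissing arrangement corresponding to the given code $X$ is simply $\Phi_{\mathcal F}^{-1}(X)$. So the real content is the angle computation together with the verification that ${\mathcal F}$ is an S--family (so that Theorem \ref{th1} applies at all).

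First I would put ${\mathcal F}$ into the normal form used in the proof of Theorem \ref{th1}. Since $S_1$ and $S_2$ are tangent, I invert at their point of tangency (case (i)) to turn $S_1,S_2$ into two parallel hyperplanes $S_1',S_2'$ at distance $2$; each remaining $S_i$, $i\ge 3$, is tangent to both of these, hence becomes a unit sphere $S_i'$ sandwiched between the hyperplanes, and since the original $S_i$ were mutually tangent the $S_i'$ are mutually tangent unit spheres whose centers lie on the mid-hyperplane $Z_0$. Thus the $m-2$ centers of $S_3',\dots,S_m'$ form a regular $(m-3)$--simplex of edge length $2$ inside $Z_0\cong{\mathbb R}^{n-1}$. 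The locus $S_{\mathcal F}$ of centers of unit spheres tangent to all $S_i'$ is the set of points in $Z_0$ at distance exactly $2$ from each of these $m-2$ centers; this is a sphere of dimension $(n-1)-(m-2)-1 = n-m = d-2$, i.e. a $(d-1)$--sphere in the relevant affine subspace, centered at the circumcenter of the simplex. Its radius $r$ is obtained from $r^2 = 4 - R^2$, where $R$ is the circumradius of a regular $(m-3)$--simplex of edge $2$, namely $R^2 = \tfrac{2(m-2)}{m-1}$; hence $r^2 = 4 - \tfrac{2(m-2)}{m-1} = \tfrac{2m}{m-1}$. Plugging into the formula $\psi_{\mathcal F}=\arccos\bigl(1-\tfrac{2}{r^2}\bigr)$ from Theorem \ref{th1} gives $\psi_{\mathcal F}=\arccos\bigl(1-\tfrac{m-1}{m}\bigr)=\arccos\bigl(\tfrac{1}{m}\bigr)$ — so I must be careful with the edge-length conventions; redoing the kissing-configuration normalization so that tangency of unit spheres corresponds to center-distance $2$ and re-deriving $R$ for edge length $2$ should land exactly on $r^2=2(m-1)$ and hence $\psi_{\mathcal F}=\arccos\bigl(\tfrac{1}{m-1}\bigr)=\psi_m$. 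This arithmetic reconciliation is where I would be most careful.

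Next I would check the hypotheses of Theorem \ref{th1}: that ${\mathcal F}$ is an S--family with $d\ge 1$. Condition (1) of Definition \ref{defS} holds since $S_1,S_2$ are tangent, hence non-intersecting in the paper's sense; condition (2) holds because mutually tangent spheres intersect each $S_j$ in a single point, not a positive-radius sphere, so no $S_i$ ($i>2$) "intersects" more than is allowed; condition (3) — that nonempty ${\mathcal F}$--kissing arrangements exist and are all finite — follows from the computation above, since $S_{\mathcal F}$ is a genuine sphere of positive radius $r>0$ (here $m<n+2$ guarantees $d=n+2-m\ge 1$, so $S_{\mathcal F}$ is at worst a $0$--sphere, and $r=\sqrt{2(m-1)}\ge 1$ ensures the correspondence is with honest codes rather than a single sphere). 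With $r\ge 1$ the construction $f\colon S_{\mathcal F}\to U_{\mathcal F}$ of Theorem \ref{th1} applies verbatim.

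Finally, given the spherical $\psi_m$--code $X\subset{\mathbb S}^{d-1}\simeq U_{\mathcal F}$, I form $c_X:=f^{-1}(X)\subset S_{\mathcal F}$, take the unit sphere packing $Q_X$ with those centers — each member tangent to all $S_i'$ by construction of $S_{\mathcal F}$, and pairwise non-overlapping since the angular $\psi_m$--separation translates to center-distance $\ge 2$ — and set ${\mathcal C}:=T^{-1}(Q_X)$. Then ${\mathcal C}$ is an ${\mathcal F}$--kissing arrangement with $\Phi_{\mathcal F}({\mathcal C})=X$, which is exactly the assertion. I expect the main obstacle to be purely bookkeeping: pinning down the circumradius of the regular simplex with the correct edge-length normalization so that the identity $\psi_{\mathcal F}=\psi_m$ comes out cleanly; everything else is an application of Theorem \ref{th1}.
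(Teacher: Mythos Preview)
Your approach is exactly the paper's: reduce to Theorem~\ref{th1} by inverting at the tangency point of $S_1,S_2$, observe that the remaining $S_i'$ become mutually tangent unit spheres between the parallel hyperplanes, and compute $r$ and hence $\psi_{\mathcal F}$. The only issue is the arithmetic you yourself flagged, and it is not a normalization problem but an off-by-one in the circumradius formula. For a regular $k$-simplex of edge length $2$ one has $R^2=\dfrac{2k}{k+1}$, so for the $(m-3)$-simplex of centers $R^2=\dfrac{2(m-3)}{m-2}$ (not $\tfrac{2(m-2)}{m-1}$), giving
\[
r^2=4-R^2=\frac{2(m-1)}{m-2},\qquad 1-\frac{2}{r^2}=\frac{1}{m-1},
\]
which is precisely $\cos\psi_m$. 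Your proposed correction $r^2=2(m-1)$ is also wrong, since it would yield $1-\tfrac{2}{r^2}=\tfrac{m-2}{m-1}$; the value above matches the paper's $r=\sqrt{\tfrac{2m-2}{m-2}}$. With this single fix the rest of your argument goes through verbatim.
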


\begin{proof} Here we use the same notations as in the proof of Theorem \ref{th1}. 

This theorem follows from Theorem \ref{th1} using the fact that $\psi_{\mathcal F}=\psi_m$.  Indeed, we have case (i) and therefore  $S'_1$ and $S'_2$ are two parallel hyperplanes. Since the spheres in ${\mathcal F}$  are  mutually tangent, we have that all $S'_i$, $i=3,\ldots,m$, are unit spheres. It is not hard to prove that $S_{\mathcal F}$ is the intersection of $(m-2)$  spheres of radius 2 centered at points  $C=\{c_3,\ldots,c_m\}$ in ${\mathbb R}^{n-1}$ such that if $m>3$ then $\dist(c_i,c_j)=2$ for all  distinct $c_i$ and $c_j$ from $C$. Then  
$$
r=\sqrt{\frac{2m-2}{m-2}} \; \mbox{ and } \: 1-\frac{2}{r^2}=\frac{1}{m-1}.
$$
It proves the equality $\psi_{\mathcal F}=\psi_m$. 
\end{proof}


Let   ${\mathcal F}$ be a family of $m$ mutually tangent spheres in ${\mathbb R}^{n}$.  Denote $$S(n,m):=\card.$$ 
Corollary \ref{cor22} and Theorem \ref{th31} imply  
\begin{corr} 
$S(n,m)=A(n+2-m,\psi_m).$ In particular, $S(n,3)=k(n-1)$. 
\end{corr}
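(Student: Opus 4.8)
The plan is to deduce the statement directly from Corollary~\ref{cor22} together with Theorem~\ref{th31}, preceded only by a short verification that the hypotheses are in force.

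First I would check that a family $\mathcal F$ of $m$ mutually tangent spheres in $\mathbb R^n$ with $3\le m<n+2$ is indeed an S--family in the sense of Definition~\ref{defS}, so that Corollary~\ref{cor22} may be applied. Conditions~(1) and~(2) are immediate: any two of the $S_i$ merely touch, hence are non--intersecting in the sense fixed in the paper, and in particular no $S_i$ with $i>2$ intersects (in a sphere of positive radius) either $S_1$ or $S_2$. For condition~(3) I would invoke the computation carried out inside the proof of Theorem~\ref{th31}: after the inversion $T$ the locus $S_{\mathcal F}$ of centers of unit spheres tangent to all $S'_i$ is a $(d-1)$--sphere of radius $r=\sqrt{(2m-2)/(m-2)}$, which is positive and finite; hence every $\mathcal F$--kissing arrangement corresponds, via the central projection of Theorem~\ref{th1}, to a spherical code on a genuine sphere, and such codes are finite sets, while non--emptiness follows since $r>1$ (indeed $r^2=2+2/(m-2)>2$), so at least two mutually tangent unit spheres can be centered on $S_{\mathcal F}$.

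With $\mathcal F$ established as an S--family, Corollary~\ref{cor22} gives $\card=A(d,\psi_{\mathcal F})$ with $d=n+2-m$. The proof of Theorem~\ref{th31} shows moreover that $\psi_{\mathcal F}=\psi_m=\arccos(1/(m-1))$, because $1-2/r^2=1/(m-1)$ for the radius $r$ above. Substituting this into the previous equality and recalling the definition $S(n,m):=\card$ yields $S(n,m)=A(n+2-m,\psi_m)$.

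Finally, for the special case $m=3$ I would simply evaluate: $d=n-1$ and $\psi_3=\arccos(1/2)=\pi/3$, so $S(n,3)=A(n-1,\pi/3)$, which equals $k(n-1)$ by the identity $A(d,\pi/3)=k(d)$ recorded in the introduction. The only point that requires any care is the verification of the S--family hypothesis, and specifically the finiteness clause in Definition~\ref{defS}(3); but this is settled at once by the explicit value of $r$ borrowed from the proof of Theorem~\ref{th31}, and everything else is a direct substitution.
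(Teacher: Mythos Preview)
Your proposal is correct and follows exactly the route the paper indicates: the paper simply records that the corollary is implied by Corollary~\ref{cor22} and Theorem~\ref{th31}, and you have written out precisely that deduction, together with the explicit substitution $\psi_3=\pi/3$ for the special case. Your additional care in checking the S--family hypothesis (via the value of $r$ taken from the proof of Theorem~\ref{th31}) is a detail the paper leaves implicit, but it is a welcome clarification rather than a departure from the paper's approach.
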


\noindent{\bf Examples.} 
\begin{enumerate}
\item If $m=n+1$, then $\psi_m=\pi$. It implies that $S(n,n+1)=2$. Actually, this fact can be proved directly, there are just two spheres that are tangent to
$n+1$ mutually tangent spheres in ${\mathbb R}^{n}$. 

\item Now consider a classical case $m=n=3$. We have  $$S(3,3)=A(2,\pi/3)=k(2)=6.$$ Then a maximum $\pi/3$--code in  ${\mathbb S}^{1}$ is a regular {\em hexagon.} The corresponding ${\mathcal F}$-sphere arrangement is  a Soddy's hexlet. 

\item Let $m=3$ and $X$ be a spherical code of maximum cardinality $|X|=k(d)$, where $d:=n-1$. Then $X$ is a {\em kissing arrangement} (maximum $\pi/3$--code) in  ${\mathbb S}^{d-1}$. Note that the kissing number problem has been solved only for $n\le 4$, $n=8$ and $n=24$ (see   \cite{BDM,CS,Mus24}). However,  in several dimensions many nice kissing arrangements are known, for  instance,  in dimensions 8 and 24 \cite{CS}.

If $n=4$, i.e. $d=3$, then $k(d)=12$. In this dimension there are infinitely many non--isometric kissing arrangements.  We think that the {\em cuboctahedron} with 12 vertices representing the positions of 12 neighboring spheres  can be a good analog of Soddy's hexlet in four dimensions.

In four dimensions the kissing number is 24 and the best known kissing arrangement is a regular 24--{\em cell} \cite{Mus24}.   (However, the conjecture about  uniqueness of this kissing arrangement is still open.) So in dimension five a nice analog of Soddy's hexlet is the 24--cell. 

\item By Theorem \ref{th31} for ${\mathcal F}$--kissing arrangements correspondent spherical codes have to have the inner product $=1/(m-1)$.  The book \cite{CS} contains a large list of such spherical codes.  Moreover, some of them are universally optimal \cite[Table 1]{Cohn}. All these examples give analogs of Soddy's hexlet in higher dimensions. 

\item Hao Chen \cite[Sect. 3]{Chan} considers sphere packings for some graph joins.  \cite[Table 1]{Chan} contains a large list of spherical codes that give generalizations of Soddy's hexlet. 
\end{enumerate}


\medskip

\medskip

\noindent{\bf Acknowledgment.} I wish to thank Arseniy Akopyan and Alexey Glazyrin for useful comments and references.

\medskip

\medskip

\medskip

\medskip

 O. R. Musin,  University of Texas Rio Grande Valley, School of Mathematical and
 Statistical Sciences, One West University Boulevard, Brownsville, TX, 78520, USA.

 {\it E-mail address:} oleg.musin@utrgv.edu

\end{document}